\documentclass[oneside]{amsart}

\usepackage{geometry}
\usepackage[utf8]{inputenc}		
\usepackage[foot]{amsaddr}		
\usepackage{amsthm}				
\usepackage{amsmath}			
\usepackage{amsfonts}			
\usepackage{amssymb}			
\usepackage{bbm}				

\usepackage{hyperref}			

\usepackage[
	backend=biber,
	style=alphabetic,
	maxnames=99,
	maxalphanames=5,
	doi=false,
	isbn=false
]{biblatex}
\renewbibmacro{in:}{}								
\DeclareFieldFormat{pages}{#1}
\DeclareFieldFormat{extraalpha}{#1}
\AtEveryBibitem{\clearfield{eprintclass}}

\DeclareLabelalphaTemplate{							
  \labelelement{
    \field[final]{shorthand}
    \field{label}
    \field[strwidth=3,strside=left,ifnames=1]{labelname}
    \field[strwidth=1,strside=left]{labelname}
  }
}

\numberwithin{equation}{section}

\newtheorem{theorem}{Theorem}[section]
\newtheorem{corollary}[theorem]{Corollary}
\newtheorem{lemma}[theorem]{Lemma}
\newtheorem{proposition}[theorem]{Proposition}

\theoremstyle{definition}

\newtheorem{example}[theorem]{Example}

\theoremstyle{remark}
\newtheorem*{remark}{Remark}

\theoremstyle{remark}

\renewcommand{\Re}{\operatorname{Re}}

\newcommand{\onebb}{\mathbbm 1}

\newcommand{\CC}{\mathbb C}								
\newcommand{\QQ}{\mathbb Q}
\newcommand{\RR}{\mathbb R}
\newcommand{\ZZ}{\mathbb Z}

\newcommand{\norm}[1]{\left\lVert #1\right\rVert}		
\newcommand{\abs}[1]{\left\lvert #1\right\rvert}
\newcommand{\divides}{\mid}

\DeclareMathOperator{\Mat}{Mat}
\DeclareMathOperator{\GL}{GL}

\DeclareMathOperator{\diag}{diag}						
\DeclareMathOperator{\tr}{tr}
\DeclareMathOperator{\rk}{rk}
\DeclareMathOperator{\pdet}{pdet}

\newcommand{\pmatrixtwo}[4]{\begin{pmatrix}#1&#2\\#3&#4\end{pmatrix}}

\begin{document}
\subjclass{Primary 11E20, 11P21. Secondary 33C10}
\keywords{Representations of quadratic forms, Gauss circle problem, matrix Bessel function, Poisson summation formula, Hankel transform}

\author{Gilles Felber}
\address{Alfréd Rényi Institute of Mathematics, Reáltanoda street 13-15, H-1053, Budapest}
\email{felber@renyi.hu}

\title{Counting representations of quadratic forms}
\begin{abstract}
We answer a question of C. S. Herz about the number of integral $k\times m$ matrices $T$ such that $T^tT\leq R$ for a fixed positive definite $m\times m$ matrix $R$. We give an asymptotic formula for the count. This can be seen as a generalization of the Gauss circle problem, as counting representation of quadratic forms by the identity $I_k$, and as counting integral points bounded by the Stiefel manifold $T^tT=R$. The main tool is a bound for Bessel functions of matrix argument that was proved over Jordan algebras.
\end{abstract}
\maketitle

\section{Introduction}
In 1955, Carl S. Herz asked in his seminal article \emph{Bessel Functions of Matrix Argument} \cite{Her55} about estimating the following set:
$$R_k(R):=\{T\in\Mat_{k,m}(\ZZ)\mid T^tT\leq R\}$$
for a fixed positive definite matrix $R\in\Mat_m(\RR)$. Here we used the Loewner order on symmetric matrices, defined by $A\leq B$ if and only if $B-A$ is positive semi-definite. Using the Poisson summation formula and the Hankel transform that he defined in the same article, Herz wrote an estimate for $R_k(R)$ given by the zero term of the dual side of the Poisson summation formula. Unfortunately, as he noted, the methods of his paper are not enough to prove an asymptotic formula. Decades later, Faraut--Travaglini \cite{FT87} and Kalliterakis \cite{Kal01} provided the necessary bounds on Bessel functions of matrix argument in the generalized setting of Jordan algebras. The goal of this paper is to answer the original question of Herz, and also to make analytic number theorists aware of the results over Jordan algebras, in the hope of more applications using these methods. Our main theorem is the following.

\begin{theorem}\label{thm R_k(R) asymptotic}
Let $k\geq m\geq1$ and $\epsilon>0$. Let $R\geq3$ be a positive definite matrix and $\lambda_{\min}(R)$ be its smallest eigenvalue. Then
\begin{align}\label{eq R_k(R) asymptotic}
R_k(R)=\frac{\Pi_m(0)\pi^{k/2}}{\Pi_m(k/2)}\det(R)^{k/2}+O_{k,m,\epsilon}\left(\det(R)^{k/2}\lambda_{\min}(R)^{-1+\frac m{k+2m+1}+\epsilon}\right).
\end{align}
Here $\Pi_m$ is a product of Gamma factors defined in Equation \eqref{eq definition Pi_m}.
\end{theorem}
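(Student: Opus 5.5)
The plan is to write $R_k(R)$ as a lattice-point sum of the indicator function $\chi_R(T)=\onebb[T^tT\leq R]$ on $\Mat_{k,m}(\RR)\cong\RR^{km}$, smooth it, and apply Poisson summation. Here $\chi_R$ is the indicator of a convex body, namely the set of $T$ whose operator norm after right-multiplication by $R^{-1/2}$ is at most $1$.

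First reduce to $R=I_m$ scaled. Writing $T=SR^{1/2}$, the body $\{T^tT\leq R\}$ is the image of $B=\{S^tS\leq I\}$ under $S\mapsto SR^{1/2}$. Its volume is $\det(R)^{k/2}\operatorname{vol}(B)$, and the standard Wishart/Siegel integral gives $\operatorname{vol}(B)=\Pi_m(0)\pi^{k/2}/\Pi_m(k/2)$. This produces the main term from the zero frequency.

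Next, the Fourier transform of $\chi_R$ at $\xi\in\Mat_{k,m}$ depends only on $\xi R^{1/2}$ and is, by Herz's Hankel transform, $\det(R)^{k/2}$ times a Bessel function of matrix argument. Up to normalisation it is
\[
\det(\pi^2 R^{1/2}\xi^t\xi R^{1/2})^{-\cdot}\,J_{k/2}(\pi^2 R^{1/2}\xi^t\xi R^{1/2}).
\]

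Then smooth. Take a bump $\rho_\delta$ on $\Mat_{k,m}$ supported in a ball of radius $\delta$. Since $B$ is convex and contains a ball, the scaled bodies $\{T^tT\leq(1\pm c\delta/\sqrt{\lambda_{\min}})^2R\}$ sandwich $\chi_R*\rho_\delta$: the perturbation $\delta$ in $T$ is relative to the smallest semi-axis $\sqrt{\lambda_{\min}}$. Changing $R$ by the factor $(1\pm\eta)^2$ with $\eta=\delta\lambda_{\min}^{-1/2}$ moves the main term by $O(\det(R)^{k/2}\eta)$. Poisson summation gives
\[
\sum_T \chi_{R'}*\rho_\delta(T)=\sum_{\xi}\widehat{\chi_{R'}}(\xi)\hat\rho(\delta\xi).
\]

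The key input is the Faraut--Travaglini/Kalliterakis bound for matrix Bessel functions, which I assume available. In the form I expect, the rescaled Fourier transform satisfies
\[
|\widehat{\chi_B}(X)|\ll\prod_{i=1}^m(1+x_i)^{-(k+1)/2}
\]
(or a similar decay in each singular value $x_i$ of $X$), possibly with a weaker uniform bound like $\|X\|^{-(k+1)/2}$. For $\xi\neq0$, the singular values of $\xi R^{1/2}$ are at least $\sqrt{\lambda_{\min}}$ times those of $\xi$. The crude path is to bound everything by $(\sqrt{\lambda_{\min}}\,|\xi|)^{-(k+1)/2}$-type decay together with rank considerations, and then sum over $\xi$ with the cutoff $|\xi|\lesssim\delta^{-1}$.

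The main obstacle is the dual sum. Rank-deficient $\xi$, in particular those of rank $1$, carry decay in only one direction, so the sum must be split according to the singular-value profile of $\xi$. I would first try to use only decay in the largest singular value, of order $(\sqrt{\lambda_{\min}}|\xi|)^{-(k+1)/2}$, summed over $\xi\in\ZZ^{km}$ with $|\xi|\leq\delta^{-1}$. This gives a dual error of roughly
\[
\det(R)^{k/2}\lambda_{\min}^{-(k+1)/4}\,\delta^{-(km-(k+1)/2)}.
\]
That is probably too weak, so I would refine it by exploiting the product decay over the singular values, counting integer matrices with prescribed sizes of singular values.

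Finally, balance this dual error against the smoothing error $\delta\lambda_{\min}^{-1/2}$. The intended outcome is the exponent $-1+\frac m{k+2m+1}$. This amounts to choosing $\delta$ as a suitable power of $\lambda_{\min}$ and working with $\lambda_{\min}$ as the effective radius squared, so the classical circle-problem trade-off gives $\lambda_{\min}^{-1+\frac{m}{k+2m+1}+\epsilon}$ after the right count. The $\epsilon$ absorbs logarithmic losses in the dyadic decomposition.
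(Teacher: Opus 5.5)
\textbf{Verdict: genuine gap.} Your framework is sound and is the paper's:
\begin{itemize}
\item sandwiching the indicator between smoothed indicators for $(1\pm\eta)^2R$ with $\eta=\delta\lambda_{\min}(R)^{-1/2}$ (the paper's bump is scaled by $aR^{1/2}$, so its $a$ is your $\eta$);
\item Poisson summation;
\item Herz's Hankel transform, giving the profile $\Pi_m(0)\det(R)^{k/2}A_{k/2}(NR)$;
\item the Faraut--Travaglini/Kalliterakis decay $\prod_i(1+x_i)^{-(k+1)/2}$ in the singular values of $\xi R^{1/2}$.
\end{itemize}
But the content of the theorem is the estimate of the nonzero frequencies, and that is missing. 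Your crude bound uses only the largest singular value. It gives relative error $\lambda_{\min}(R)^{-km/(2km-k+1)}$. For $m=1$ this is the classical $-k/(k+1)$, but for $m=2$ and $k$ large it is only about $\lambda_{\min}^{-2/3}$. The refinement by ``counting integer matrices with prescribed sizes of singular values'' is never carried out, so the exponent $-1+\frac m{k+2m+1}$ is announced, not derived. In particular you do not resolve the difficulty you name yourself: integral $\xi$ with some small singular values, where the product decay is weak.

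The missing idea is arithmetic. Group $S\in\Mat_{k,m}(\ZZ)$ by its Gram matrix $N=S^tS$, which is integral, positive semi-definite and of rank $r$. Since $\pdet(NR)\gg\lambda_{\min}(R)^r\pdet(N)$, the decay becomes $A_{k/2}(NR)\ll\lambda_{\min}^{-r(k+1)/4}\pdet(N)^{-(k+1)/4}$. This is paired with the multiplicity bound $r_k(N)\ll\pdet(N)^{k/2}$. In full rank that bound comes from Klingen's estimate for the Siegel theta series; in lower rank one first applies a unimodular congruence $U^tNU=\diag(N_1,0)$ with $\det N_1\le\pdet N$. Each $N$ then contributes $\ll\lambda_{\min}^{-r(k+1)/4}\pdet(N)^{(k-1)/4}$, so small singular values of $S$ cost nothing. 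You also need the number of integral symmetric matrices of rank $r$ and height $t$:
\begin{itemize}
\item $t^{(m-1)/2}$ for $r=1$, which is elementary;
\item $t^{mr/2+\epsilon}$ for $2\le r\le m-1$, by Eskin--Katznelson and Schmidt (this, not dyadic logarithms, is where the $\epsilon$ comes from);
\item $t^{m(m+1)/2-1}$ for $r=m$.
\end{itemize}
Then sum up to $\norm N\ll(a^2\lambda_{\min})^{-1}$ and optimize $a$ rank by rank. None of these ingredients is in your plan.

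Do that last step explicitly rather than aiming at the target. One has $\lambda_{\min}^{-r(k+1)/4}(a^2\lambda_{\min})^{-r(k-1)/4-d(r)-1}=a^{-r(k-1)/2-2d(r)-2}\lambda_{\min}^{-rk/2-d(r)-1}$. The paper's display instead has $a^{-r(k-1)/2-2d(r)-1}$. With the correct power, balancing against the smoothing error $a$ gives $-1+\frac{m+2}{k+2m+3}$, with the worst case at $r=1$. So the stated exponent does not simply fall out of this count; further input would be needed, for instance a sharper bound for $r_k(N)$.

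A minor point: for $m\ge2$ the volume of $\{S^tS\le I_m\}$ is $\pi^{km/2}\Pi_m(0)/\Pi_m(k/2)$, not $\pi^{k/2}\Pi_m(0)/\Pi_m(k/2)$. The statement's main term comes from replacing $R$ by $\pi R$ in $\det(R)^{k/2}$, which produces $\pi^{km/2}$.
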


This theorem gives a uniform asymptotic expansion of $R_k(R)$ for matrices $R\geq3$ with all their eigenvalues going to infinity. The restriction to $R\geq3$ is technical and can be slightly improved. The smallest eigenvalue of $R$ naturally arises in the proof when matrices of lower rank appear. The factor $\lambda_{\min}(R)^\epsilon$ only appears in the count of integral symmetric matrices of a given rank, and could be replaced by a $\log$ factor, or even removed with a more precise version of Theorem \ref{thm number of symmetric matrices}.

If $m=1$, we recover the classical case of the Gauss problem in dimension $k$ of counting integral vectors $x\in\ZZ^k$ such that $\norm x\leq r$, with $r^2=R\in\RR_{>0}$. In that case, the smallest eigenvalue is just $r^2$ and we get the error term $O(r^{k-2+\frac2{k+3}+\epsilon})$. This is very close to the best possible exponent $O(r^{k-2})$ for $k\geq4$. Matching upper and lower bounds for the error term are known in that case. It is easy to slightly improve our bound to the exponent $O(r^{k-2+\frac2{k+1}})$ using that Theorem \ref{thm number of symmetric matrices} is trivial in that case and
$$\#\{x\in\ZZ^k\mid\norm x^2=n\}=O(n^{k/2-1})$$
instead of Proposition \ref{pro bound r_k(M)}. In the case $m\geq2$, improved bounds on the number of representations of $M$ as $T^tT$ were proved by multiple people in specific cases (e.g. \cite{Rag59,Kit82a,DH13}), assuming at least information on the minimum of the quadratic form associated to $M$. These bounds could improve the final exponent in our estimate but require to split the dual sum of the Poisson summation formula in multiple cases.

For more details on the Gauss problem in dimension $k$, there are multiple surveys on the question, for example \cite{Sch04,IKKN06} . We also mention \cite{Krae00} that we consulted for classical proofs of the lower and upper bounds on the error term in the Gauss problem in $k$ dimensions, the monography \cite{Wal57}, and \cite{EL25} that we used for inspiration and as a comparison point.

The proof of Theorem \ref{thm R_k(R) asymptotic} is classical. Following the normalization of Herz, a matrix function $F:\Mat_{k,m}(\RR)\to\CC$ is \emph{radial} if there is a \emph{profile function} $f:\Mat_m(\RR)\to\CC$ such that $F(T)=f(\pi T^tT)$. We consider the characteristic function of the matrices $T\in\Mat_{k,m}(\RR)$ with $\pi T^tT\leq R$ and we convolute it with a radial bump function with shrinking support. Then we use the Poisson summation formula for $\ZZ^{km}$ to obtain the main term given by the term at 0 of the dual sum. It remains to bound the non-zero terms of the dual sum. For this, we use the Hankel transform developed by Herz. It is an integral against a generalized $J$-Bessel function of matrix argument define by
\begin{align}\label{eq definition A_k}
A_k^{(m)}(M)=\frac1{(2\pi i)^{m(m+1)/2}}\int_{\Re(Z)=X_0>0}e^{\tr(Z-MZ^{-1})}\det(Z)^{-k-(m+1)/2}dZ
\end{align}
for $M\geq0$ and a fixed matrix $X_0>0$. The Poisson summation and radial functions are discussed in Section \ref{sec Poisson summation}. In Section \ref{sec Bessel functions}, we introduced a bound on $A_k^{(m)}(M)$ that was computed on Jordan algebra using a stationary phase argument. Finally in Sections \ref{sec preliminaries} and \ref{sec proof of thm}, we proceed with the proof of the theorem, using bounds on the number of representations of $M\in\Mat_m(\ZZ)$ by $T^tT$ and a bound on the error given by the convolution with a bump function.

It would be of great interest to prove a lower bound for the error term in Equation \eqref{eq R_k(R) asymptotic}. We did not succeed in getting an interesting result. The main problem is to find $\delta>0$ as large as possible such that (assuming $R$ integral) there is no matrix $M\in\Mat_m(\ZZ)$ with
$$0\leq M\leq R+\delta I_m\quad \text{and}\quad M\not\leq R.$$
Comparing to the case $m=1$, we conjecture that the optimal error term in Equation \eqref{eq R_k(R) asymptotic} is $O_{k,m}(\det(R)^{k/2}\lambda_{\min}(R)^{-1})$ (at least for $k$ large compared to $m$). Proving $\delta\gg_{k,m}1$ would be sufficient to prove the corresponding lower bound. See \cite[Satz 5.9]{Krae00} for the details in the case $m=1$, which can be easily extended to our setting.

There are also many variations of the above problem that have been studied in the case $m=1$, such as considering the error on average or a shifted lattice. See \cite{EL25} for some examples. The primitive Gauss problem reduces the count to primitive vectors with the greatest common divisor of the entries equal to 1. We can naturally extend this condition by asking that the content of the matrices $T$ that we are counting is 1. We hope to return to these interesting questions in the future.

\subsection{Notation}
We (mostly) follow the notations of Herz \cite{Her55}. We usually consider $m$ by $m$ square matrices and $k$ by $m$ rectangular matrices with $k\geq m$. We write $p=\frac{m+1}2$ and $A[B]:=B^tAB$. We use the Loewner order defined by $A>B$ if $A-B$ is positive definite resp. $A\geq B$ if $A-B$ is positive semi-definite. We write $\Omega_m $ resp. $\bar\Omega_m$ for the set of $m$ by $m$ positive definite resp. positive semi-definite matrices. Note that $\Omega_m$ is a cone and $\bar\Omega_m$ is its closure.

We denote by $\norm T$ the operator norm of $T$ and $\norm T_\infty$ the maximum norm of $T$, that is
$$\norm T^2:=\lambda_{\max}(T^tT),\qquad\norm T_\infty:=\max_{i,j}\abs{t_{ij}}.$$
Recall that $\norm T_\infty\leq\norm T\leq\sqrt{km}\norm T_\infty$ for $T\in\Mat_{k,m}(\RR)$. The \emph{pseudo-determinant} of a matrix $M$, written $\pdet(M)$, is the product of its non-zero eigenvalues.

We consider the product of Lebesgue measures $dX=\prod_{i\leq j}dx_{ij}$ for a symmetric matrix $X$ and $dT=\prod_{i,j}dt_{ij}$ for a rectangular matrix $T$. We define the products of Gamma factors:
\begin{align}\label{eq definition Pi_m}
\Pi_m(\delta)&:=\pi^{m(m-1)/4}\prod_{i=1}^m\Gamma\left(\delta+\frac{i+1}2\right).
\end{align}
We denote the Fourier transform of $F:\Mat_{k,m}(\RR)\to\CC$ by
$$\hat F(S):=\int_{\Mat_{k,m}(\RR)}e(-S^tT)F(T)dT$$
where $e(Z):=e^{2\pi i\tr(Z)}$.

\subsection{Acknowledgment}
The author thanks Edgar Assing and Árpád Tóth for the enlightening discussions and advice. The research towards this paper was supported by the MTA–RI Lendület “Momentum” Analytic Number Theory and Representation Theory Research Group.

\section{The Poisson Summation Formula and Bessel functions of matrix argument}

\subsection{The Poisson formula}\label{sec Poisson summation}
In this section, we consider the Poisson summation formula for radial functions on $\Mat_{k,m}(\RR)$. First, let $F:\Mat_{k,m}(\RR)\to\CC$ be a Schwartz function. By identifying $\Mat_{k,m}(\RR)\cong\RR^{km}$, we have
$$\sum_{T\in\Mat_{k,m}(\ZZ)}F(T)=\sum_{S\in\Mat_{k,m}(\ZZ)}\hat F(S)$$
with $\hat F$ the Fourier transform of $F$ over $\RR^{km}$. The convergence of the sums is ensured for Schwartz functions.

Recall that a function $F:\Mat_{k,m}(\RR)\to\CC$ is \emph{radial} if there is a \emph{profile function} $f:\bar\Omega_m\to\CC$ such that $F(T)=f(\pi T^tT)$. It turns out that if $F$ is a radial function, so is its Fourier transform $\hat F$. Herz answered the question of the link between $\hat F$ and $f$.

\begin{theorem}[{Hankel transform, \cite[Theorem 3.4]{Her55}}]\label{thm Hankel transform}
Let $p=\frac{m+1}2$. Let $F:\Mat_{k,m}(\RR)\to\CC$ be a radial function with profile $f:\bar\Omega_m\to\CC$. Suppose that $F\in L^2(\Mat_{k,m}(\RR))$. Then $\hat F$ is also radial. Moreover the function $\det(M)^{k/2-p}f(M)\in L^2(\Mat_m(\RR))$ and the profile of $\hat F$ is given by
$$\tilde f(N):=\int_{\Omega_m}A_{k/2-p}^{(m)}(NM)\det(M)^{k/2-p}f(M)dM.$$
We call $\tilde f$ the \emph{Hankel transform} of $f$.
\end{theorem}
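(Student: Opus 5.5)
The plan is to reduce to the classical Euclidean Fourier transform on $\RR^{km}$, integrate over the orbits of the orthogonal group $O(k)$ acting on the left, and identify the resulting kernel with Herz's Bessel function $A^{(m)}_{k/2-p}$.

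\textbf{Radiality of $\hat F$.} For $U\in O(k)$ we have $(UT)^t(UT)=T^tT$, and the measure $dT$ is invariant under $T\mapsto UT$. The substitution $T\mapsto U^tT$ gives $\hat F(US)=\hat F(S)$. The left $O(k)$-orbits on $\Mat_{k,m}(\RR)$ are exactly the fibres of $S\mapsto S^tS$. Indeed, when $k\ge m$, two matrices with the same Gram matrix differ by an orthogonal transformation: the isometry between their column spans extends to $\RR^k$. Hence $\hat F(S)$ depends only on $\pi S^tS$. Since the Fourier transform is unitary on $L^2$, all of this holds in $L^2$. I would first prove everything for Schwartz $F$ (or $F\in L^1\cap L^2$) and then extend by density.

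\textbf{Polar coordinates.} Write $T=HM^{1/2}$ with $M=T^tT$ and $H$ in the Stiefel manifold $V_{k,m}=\{H:H^tH=I_m\}$. The key Jacobian formula is
$$dT=c_{k,m}\det(M)^{k/2-p}\,dM\,dH,$$
where $dH$ is the invariant probability measure and $c_{k,m}=\pi^{km/2}/\Gamma_m(k/2)$. The Jacobian is computed by expressing $T^tT$ and comparing with the Gaussian integral $\int e^{-\pi\tr T^tT}dT=1$. After the rescaling $M\mapsto \pi M$, this normalization produces the profile variable $\pi T^tT$. This step also shows that $\int\abs F^2dT$ is, up to a constant, $\int\abs{f}^2\det(M)^{k/2-p}dM$, which gives the stated $L^2$-type integrability. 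The statement puts the weight $\det(M)^{k/2-p}$ on $f$ itself; I would check the exact exponent against Herz's conventions.

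\textbf{Identifying the kernel.} Substituting the polar coordinates,
$$\hat F(S)=\int_{\Omega_m} f(\pi M)\det(M)^{k/2-p}\Big(c_{k,m}\int_{V_{k,m}}e(-S^tHM^{1/2})dH\Big)dM.$$
The inner spherical integral is an entire function $\Phi(\pi S^tS,\pi M)$. By invariance it depends only on the eigenvalues of $(S^tS)M$, so it is a function of the product $NM$ after the variable change. It then remains to show $\Phi=A^{(m)}_{k/2-p}(NM)$ with the normalization of \eqref{eq definition A_k}.

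\textbf{Main obstacle.} This identification is the hard step. My approach is to test both sides against Gaussians $f(M)=e^{-\tr(ZM)}$ with $\Re Z>0$, that is, to compare Laplace transforms in $M$. The left side is an explicit Gaussian Fourier transform, equal to $\det(Z)^{-k/2}e^{-\tr(NZ^{-1})}$ up to constants. For the right side, I would insert the contour integral \eqref{eq definition A_k}, exchange the order of integration, and use the matrix Gamma integral $\int_{\Omega_m}e^{-\tr(ZM)}\det(M)^{\delta-p}dM=\Gamma_m(\delta)\det(Z)^{-\delta}$ together with Laplace inversion. The two computations should match. Uniqueness of Laplace transforms on $\Omega_m$, together with the density of linear combinations of such Gaussians in the relevant weighted $L^2$ space, then gives the formula for all $f$. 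The remaining technical work is justifying the exchange of integrals and tracking the constants $\pi^{m(m-1)/4}$ and the $\pi$-scalings.
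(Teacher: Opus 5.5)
The paper gives no proof of this statement; it is quoted from Herz, so there is no internal argument to compare with. Your route is the standard one and is sound in outline: $O(k)$-invariance for radiality, the polar decomposition $T=HM^{1/2}$ with constant $\pi^{km/2}/\Gamma_m(k/2)$ (in the paper's notation $\Gamma_m(k/2)=\Pi_m(k/2-p)$), identification of the Stiefel average with $A^{(m)}_{k/2-p}$ by Laplace transforms against $e^{-\tr(ZM)}$, and Plancherel.

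Your doubt about the weight is justified. After $M\mapsto M/\pi$ your Jacobian gives
\[\int_{\Mat_{k,m}(\RR)}\abs{F(T)}^2dT=\Pi_m(k/2-p)^{-1}\int_{\Omega_m}\abs{f(M)}^2\det(M)^{k/2-p}dM.\]
So the correct condition is $f\in L^2(\Omega_m,\det(M)^{k/2-p}dM)$, i.e. $\det(M)^{(k/2-p)/2}f\in L^2(\Omega_m)$. The condition as printed does not follow from $F\in L^2$. For $m=1$, $k=3$ and $f(x)=x^{-1}\onebb_{x\geq1}$ you get $F\in L^2$, but $\det(M)^{k/2-p}f=x^{-1/2}$ on $[1,\infty)$ is not square integrable.

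The genuine gap is in the step you call the main obstacle. The contour integral \eqref{eq definition A_k} of index $\delta$ converges absolutely only for $\Re\delta>p-1$, as the paper itself notes. For $\delta=k/2-p$ this means only $k>2m$. For $m\leq k\leq 2m$, including $k=m$ with $\delta=-1/2$, which the main theorem uses, $A_{k/2-p}$ exists only by analytic continuation. There is then no integral to insert, and no exchange of integrals to justify. What you need is
\[\int_{\Omega_m}e^{-\tr(ZM)}A_\delta(NM)\det(M)^\delta dM=\det(Z)^{-\delta-p}e^{-\tr(NZ^{-1})}\qquad(\Re Z>0,\ N\geq0).\]
Prove it for $\Re\delta>p-1$, then continue analytically to $\Re\delta>-1$. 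Both sides are holomorphic in $\delta$ there; for the left side you need a locally uniform subexponential bound on $A_\delta$ over $\bar\Omega_m$, e.g. from its power series.

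In the convergent range, write $A_\delta(NM)=A_\delta(N^{1/2}MN^{1/2})$. The $M$-integral is then a Gamma integral with symmetric parameter $Z+N^{1/2}W^{-1}N^{1/2}$, whose real part is positive definite. Sylvester's determinant identity reduces what remains to
\[\Pi_m(\delta)\det(Z)^{-\delta-p}\frac1{(2\pi i)^{mp}}\int_{\Re W=X_0}e^{\tr W}\det\bigl(W+N^{1/2}Z^{-1}N^{1/2}\bigr)^{-\delta-p}dW.\]
This equals $\det(Z)^{-\delta-p}e^{-\tr(NZ^{-1})}$ after shifting the contour and using $A_\delta(0)=\Pi_m(\delta)^{-1}$. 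With $\delta=k/2-p\geq-1/2$, apply uniqueness of the Laplace transform to $M\mapsto\Phi(N,M)\det(M)^{k/2-p}$ (your normalized Stiefel average) and $M\mapsto A_{k/2-p}(NM)\det(M)^{k/2-p}$. This identifies the kernel pointwise.

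Finally, the integral defining $\tilde f$ converges absolutely only under an extra hypothesis such as $F\in L^1\cap L^2$. The kernel is bounded (by $\Pi_m(k/2-p)^{-1}$, from your Stiefel representation) but is not in $L^2(\det(M)^{k/2-p}dM)$; for $m=1$ this amounts to $\int_1^\infty J_{k/2-1}(2\sqrt x)^2dx=\infty$. So for general $F\in L^2$ the formula must be read as an $L^2$-limit. Your density step gives exactly this. Once the kernel is identified pointwise, you need no density of Gaussians, only Plancherel on $\RR^{km}$ transported through the polar formula.
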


Recall that $A_k^{(m)}(M)$ is the matrix Bessel function defined in Equation \eqref{eq definition A_k}. The integral is well-defined for $\Re(k)>p-1$ and has an analytic continuation to $k\in\CC$. It is also entire in $M$. Finally, it is bounded for $M>0$ and invariant by conjugation of $M$. If $m=1$, then we have
$$J_k(x)=A_k^{(1)}(x^2/4)(x/2)^k.$$

Grouping the terms with respect to the profile of $F$, we deduce the following version of Poisson summation formula:
$$\sum_{\substack{M\in\Mat_m(\ZZ)\\M\geq0}}r_k(M)f(M)=\sum_{\substack{N\in\Mat_m(\ZZ)\\N\geq0}}r_k(N)\tilde f(\pi N),$$
where $r_k(M)$ is
\begin{align}\label{eq definition r_k(M)}
r_k(M):=\{T\in\Mat_{k,m}(\ZZ)\mid T^tT=M\}.
\end{align}

We will need the following Hankel transform, which is a generalization of Sonine's formula:
\begin{align}\label{eq Sonine formula}
A_{k+p}(NR)\det(R)^{k+p}=\Pi_m(0)^{-1}\int_0^RA_k(NM)\det(M)^kdM.
\end{align}
It holds for $\Re(k)>-1$, $R\geq0$ and $N$ an arbitrary complex matrix. This is proved by specializing Equation (2.6) in \cite{Her55} and recalling that $A_k(0)=\Pi_m(k)^{-1}$.

\begin{remark}
Equation (2.6) in \cite{Her55} in combination to the Poisson summation formula also allows us to prove the following formula stated in the introduction of Herz's article:
\begin{proposition}[{\cite[p.476]{Her55}}]
Let $R>0$ be a fixed matrix and $\delta>(k+m-1)/2$.
\begin{align}\label{eq Poisson summation}
\Pi_m(\delta)^{-1}\sum_{\substack{M\in\bar\Omega_m(\ZZ)\\\pi M\leq R}}r_k(M)\det(R-\pi M)^\delta=\sum_{N\in\bar\Omega_m(\ZZ)}r_k(N)A_{k/2+\delta}(\pi NR)\det(R)^{k/2+\delta}.
\end{align}
\end{proposition}
\end{remark}

\subsection{Asymptotic for Bessel functions of matrix arguments}\label{sec Bessel functions}

The goal of this section is to state the bound for the function $A_k^{(m)}$ and explain where it comes from.

\begin{theorem}[\cite{FT87,Kal01}]\label{thm asymptotic formula Kalliterakis}
Let $k\geq m$ be positive integers and $X\in\bar\Omega_m$ with eigenvalues $\lambda_1\geq\dots\geq\lambda_m\geq0$. Then
\begin{align}\label{eq asymptotic formula Kalliterakis}
A_{k/2-(m+1)/2}(X)\ll_{k,m}\sum_{\epsilon\in\{\pm1\}^m}\prod_{1\leq i<j\leq m}\left(1+\lambda_i^{1/2}+\epsilon_i\epsilon_j\lambda_j^{1/2}\right)^{-1/2}\prod_{i=1}^m(1+\lambda_i^{1/2})^{m/2-k/2}.
\end{align}
\end{theorem}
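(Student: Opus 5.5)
The plan is to realise $A_{k/2-p}^{(m)}$ as the Fourier transform of the invariant probability measure $\sigma$ on the Stiefel manifold $V_{k,m}=\{U\in\Mat_{k,m}(\RR)\mid U^tU=I_m\}$, and then bound that oscillatory integral by stationary phase. By Theorem \ref{thm Hankel transform}, applied to a radial $F$ concentrated near $V_{k,m}$, or directly from \eqref{eq definition A_k} by integrating over the orbit $U\mapsto gUh$, $g\in O(k)$, $h\in O(m)$, we obtain
$$\int_{V_{k,m}}e(-S^tU)\,d\sigma(U)=c_{k,m}\,A_{k/2-p}^{(m)}(\pi^2S^tS)$$
for some constant $c_{k,m}$. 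Since both sides are invariant under $S\mapsto gSh$ and $A$ is conjugation invariant, we may take $S$ to have top $m\times m$ block $\diag(s_1,\dots,s_m)$ and zeros below. Here $s_1\geq\dots\geq s_m\geq0$ and $\lambda_i=\pi^2s_i^2$. The phase becomes $\phi(U)=-2\pi\sum_i s_iu_{ii}$ on the compact manifold $V_{k,m}$, which has dimension $m(k-m)+m(m-1)/2$.

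\textbf{Critical points and Hessians.} For generic distinct positive $s_i$, the critical points of $\phi$ on $V_{k,m}$ are exactly the $2^m$ matrices $U_\epsilon$ with top block $\diag(\epsilon_1,\dots,\epsilon_m)$ and zeros below, for $\epsilon\in\{\pm1\}^m$. The tangent space at $U_\epsilon$ splits into two kinds of directions.
\begin{itemize}
\item Antisymmetric directions inside the top block, one for each pair $i<j$. On these the Hessian eigenvalue is, up to sign, $\epsilon_is_i+\epsilon_js_j$, whose absolute value is $s_i+\epsilon_i\epsilon_js_j$.
\item The $m(k-m)$ directions into the lower $k-m$ rows. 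Column $i$ contributes eigenvalue $\pm s_i$ with multiplicity $k-m$.
\end{itemize}
Non-degenerate stationary phase then gives a contribution of size
$$\prod_{i<j}(s_i+\epsilon_i\epsilon_js_j)^{-1/2}\prod_i s_i^{-(k-m)/2}$$
from $U_\epsilon$. Summing over $\epsilon$ reproduces exactly the shape of \eqref{eq asymptotic formula Kalliterakis}, after replacing each $s$ by $1+s$ to account for the trivial bound $\abs{A}\ll1$ when the phase is small.

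\textbf{Uniformity, the main obstacle.} The real difficulty is making this bound uniform in $X$. Critical points degenerate when some $s_i$ is small, when $s_i\approx s_j$ (for $\epsilon_i\epsilon_j=-1$), or when the $s_i$ lie at very different scales. The factor $1+\lambda_i^{1/2}-\lambda_j^{1/2}\geq1$ already suggests that near-coincidences should cost nothing beyond boundedness, but this must be proved. My first attempt would be induction on $m$. Integrate over the first column $u_1\in S^{k-1}$ last, writing $V_{k,m}$ as a fibration over $S^{k-1}$ with fibre $V_{k-1,m-1}$. The inner integral is then an $A^{(m-1)}$-type function of the compressed matrix $S'$. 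The outer integral is a one-variable oscillatory integral on the sphere, controlled by classical uniform bounds for $J_\nu$. Tracking how the singular values of $S'$ interlace with $s_2,\dots,s_m$ should produce the pair factors $(1+s_1\pm s_j)^{-1/2}$.

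If the induction loses too much in the mixed-scale regime, I would instead follow Faraut--Travaglini. That means viewing $\phi$ as a height function on the $K$-orbit in the Jordan algebra, using the Cartan-type decomposition to split $V_{k,m}$ into neighbourhoods of each $U_\epsilon$, and applying uniform stationary phase with parameters of Duistermaat type on each piece, with a rescaling adapted to the eigenvalue clusters of $X$. Away from all $U_\epsilon$, repeated integration by parts gives rapid decay in $s_1$.
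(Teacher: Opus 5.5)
This is a genuine gap. Your set-up is correct and matches the stationary-phase picture behind the result the paper quotes:

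\begin{itemize}
\item $A_{k/2-p}(\pi^2S^tS)$ is a constant multiple of the Fourier transform of the invariant probability measure on $V_{k,m}$.
\item For distinct positive $s_i$, the critical points are the $2^m$ frames $U_\epsilon$. They come from $S=U\Lambda$ with $\Lambda$ a symmetric square root of $S^tS$.
\item The Hessian at $U_\epsilon$ is proportional to $\tr(\Lambda\xi^t\xi)$ with $\Lambda=\diag(\epsilon_is_i)$, and has exactly the eigenvalues you list.
\end{itemize}

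But this only reproduces the Faraut--Travaglini input. That input is an asymptotic for $A(t^2X_0)$ as $t\to\infty$, for fixed $X_0$ with pairwise distinct eigenvalues, with constants depending on $X_0$.

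The statement is a bound uniform over all of $\bar\Omega_m$, and that uniformity is its whole content. The paper takes it from Kalliterakis, and it is needed downstream: the Corollary is applied to $X=NR$ with $N$ of rank $r<m$ and eigenvalues at unrelated scales. Uniformity fails exactly in the configurations your heuristic ignores.
\begin{itemize}
\item At $s_i=s_j$, the critical frames with $\epsilon_i\epsilon_j=-1$ merge into a circle, since the reflections $s\pmatrixtwo{\cos\theta}{\sin\theta}{\sin\theta}{-\cos\theta}$ are also symmetric square roots.
\item At $s_m=0$, the last column becomes free on a sphere $S^{k-m}$.
\item Near such configurations, the non-degenerate stationary phase remainders, which are controlled by the inverse Hessian, are not uniformly bounded.
\end{itemize}
So ``after replacing each $s$ by $1+s$'' is a restatement of the theorem, not a step of a proof. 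Literally replacing $s$ by $1+s$ even leaves $s_i-s_j$ unchanged. What must be shown is that every Hessian eigenvalue $\mu$ can be replaced by $1+\abs{\mu}$, simultaneously and uniformly in $X$.

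Neither fallback closes this gap as described.

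In the fibration $V_{k,m}\to S^{k-1}$, the outer integral is not one-dimensional for $m\geq2$. Let $u_1$ be the first column and $v=(u_{21},\dots,u_{m1})$. The fibre integral is a constant times $A^{(m-1)}_{(k-m-1)/2}$ evaluated at $\pi^2D'(I_{m-1}-vv^t)D'$, with $D'=\diag(s_2,\dots,s_m)$. So it oscillates in $v$ at frequencies of size $s_j$, and the outer integral runs over all of $(u_{11},v)$. Suppose you bound the fibre integral by the induction hypothesis and use only the oscillation in $u_{11}$. Then you get at best decay $s_1^{-(k-m)/2}$ in $s_1$, and the pair factors $\prod_{j\geq2}(1+s_1\pm s_j)^{-1/2}$ are lost. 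For example, for $m=2$ and $s_1\gg s_2\gg1$ you miss a factor $s_1^{-1/2}$. Those factors come from cancellation in $v$ between the phases $\pm2\pi s_1\sqrt{1-\abs{v}^2}$ produced by the $u_{11}$-integral and the oscillation of $A^{(m-1)}$. That is again a multi-variable stationary phase problem, with the same degeneracies.

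The second route, ``follow Faraut--Travaglini'', is a citation rather than an argument. It would not suffice on its own either, because their analysis assumes pairwise distinct eigenvalues. The coinciding, vanishing and mixed-scale cases are exactly what Kalliterakis supplies.

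As it stands, your proposal establishes the bound only heuristically. You need either a genuinely uniform, degenerate stationary phase analysis, or to invoke both papers, as the paper does.
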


Faraut and Travaglini proved an asymptotic formula in the case of $X\in\Omega_m$ with pairwise different eigenvalues. This restriction ensures that the Hessian appearing in the method of stationary phase is non-singular. Kalliterakis worked out then the degenerated cases. Both articles are in the setting of euclidian Jordan algebras, which generalizes the sets of symmetric matrices over real numbers or hermitian matrices over complex numbers, quaternions and octonions together with the binary operation $X\circ Y=\frac{XY+YX}2$. To learn more about this subject, we recommend the book of Faraut and Korányi \cite{FK94}. Note that the normalization of the matrix Bessel function in Chapter XV differs slightly.

Recall that the \emph{pseudo-determinant} of a matrix $M$, written $\pdet(M)$, is the product of its non-zero eigenvalues. The following is easily deduced from Theorem \ref{thm asymptotic formula Kalliterakis}.
\begin{corollary}
Let $k\geq m$ be positive integers and $X\in\bar\Omega_m$. Then
$$A_k(X)\ll_{k,m}\pdet(X)^{-k/2-1/4}.$$
\end{corollary}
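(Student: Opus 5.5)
The plan is to specialize Theorem \ref{thm asymptotic formula Kalliterakis} with a shifted index, so that its function becomes exactly $A_k$, and then estimate each factor on the right-hand side of \eqref{eq asymptotic formula Kalliterakis} separately.

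\textbf{Reindexing.} Given the integer $k\geq m$ of the corollary, I will apply Theorem \ref{thm asymptotic formula Kalliterakis} with $k':=2k+m+1$ in place of $k$. Then
$$\frac{k'}2-\frac{m+1}2=k,$$
so the left-hand side of \eqref{eq asymptotic formula Kalliterakis} is $A_k(X)$. The hypothesis $k'\geq m$ clearly holds. The exponent in the last product becomes
$$\frac m2-\frac{k'}2=-k-\frac12 .$$

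\textbf{Bounding the factors.} Let $\lambda_1\geq\dots\geq\lambda_m\geq0$ be the eigenvalues of $X$.

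The cross factors are harmless. For $i<j$ we have $\lambda_i\geq\lambda_j$, so for either sign $\epsilon_i\epsilon_j$,
$$1+\lambda_i^{1/2}+\epsilon_i\epsilon_j\lambda_j^{1/2}\geq 1 .$$
Each factor $(\cdots)^{-1/2}$ is therefore at most $1$. Summing over the $2^m$ sign choices costs only a constant depending on $m$.

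For the diagonal product, I will use the elementary inequalities
$$(1+\lambda^{1/2})^{-k-1/2}\leq \lambda^{-k/2-1/4}\quad\text{for }\lambda>0,$$
$$(1+\lambda^{1/2})^{-k-1/2}\leq 1\quad\text{for all }\lambda\geq0 .$$
I apply the first inequality to each nonzero eigenvalue and the second to each zero eigenvalue. The product of the nonzero eigenvalues is $\pdet(X)$, so the diagonal product is at most $\pdet(X)^{-k/2-1/4}$. Combining this with the bound on the cross factors gives the corollary, with an implied constant depending only on $k$ and $m$. Note that the constant from the theorem depends on $k'$, which is itself determined by $k$ and $m$.

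\textbf{Expected obstacle.} The only delicate point is the indexing: the theorem's $k$ must be matched correctly to the corollary's $k$, and the hypothesis $k'\geq m$ must be checked. Beyond that, the argument uses only the monotonicity of $1+\lambda_i^{1/2}\pm\lambda_j^{1/2}$ in the ordering $\lambda_i\geq\lambda_j$, so I do not expect a genuine obstacle.
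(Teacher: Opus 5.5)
Your proof is correct and is essentially the paper's argument. You discard the cross product in \eqref{eq asymptotic formula Kalliterakis} because each factor is at most $1$, and you bound $1+\lambda_i^{1/2}\geq\lambda_i^{1/2}$ for the nonzero eigenvalues while the zero eigenvalues contribute at most $1$; your explicit shift $k'=2k+m+1$, which gives the exponent $-k-1/2$, simply spells out the reindexing the paper leaves implicit.
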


\begin{proof}
The first product in Equation \eqref{eq asymptotic formula Kalliterakis} is always at most 1 so it can be removed. For the second term, we have
$$\prod_{i=1}^m(1+\lambda_i^{1/2})\geq\prod_{\lambda_i\neq0}\lambda_i^{1/2}=\pdet(X)^{1/2}.$$
\end{proof}

\begin{example}
If $m=1$, we recover
$$J_k(x)=A_k(x^2/4)(x/2)^k\ll_k(1+x^2)^{-k/2-1/4}x^k\ll_k\min\{x^k,x^{-1/2}\}.$$
If $m=2$, we have by Equation (7.2') of \cite{Her55}
\begin{align*}
A_k^{(2)}(X)&=\frac1\pi\int_0^1A_k^{(1)}(x_1t)A_k^{(2)}(x_2t)t^k(1-t)^{-1/2}dt\\
	&=\frac1\pi(x_1x_2)^{-k/2}\int_0^1J_k(2\sqrt{x_1t})J_k(2\sqrt{x_2t})(1-t)^{-1/2}dt\\
	&\ll_k(x_1x_2)^{-k/2}\min\{x_1^{k/2},x_1^{-1/4}\}\min\{x_2^{k/2},x_2^{-1/4}\}
\end{align*}
where $x_1,x_2>0$ are the eigenvalues of $X$. It is not hard to see that this is the same bound as the one from the Corollary:
$$A_k^{(2)}(X)\ll_k(1+x_1)^{-k/2-1/4}(1+x_2)^{-k/2-1/4}.$$
Equation \eqref{eq asymptotic formula Kalliterakis} gives an extra win when $\abs{x_1\pm x_2}$ is large.
\end{example}

\section{Preliminary results}\label{sec preliminaries}

\subsection{Bounding \texorpdfstring{$r_k(M)$}{rₖ(N)}}

The goal of this section is to give a bound on $r_k(M)$ (defined in Equation \eqref{eq definition r_k(M)} depending on the pseudo-determinant of $M$.

\begin{lemma}\label{lem congruence of symmetric matrix}
Let $M\in\Mat_m(\ZZ)$ be a symmetric matrix of rank $r\leq m$ and $VMU$ be its Smith normal form with $U,V\in\GL_m(\ZZ)$. Then
$$U^tMU=\pmatrixtwo{N_1}{}{}{0}$$
with $N_1\in\Mat_r(\ZZ)$ of rank $r$. Moreover, we have
$$\det(N_1)\leq\pdet(M).$$
\end{lemma}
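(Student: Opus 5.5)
Write the Smith normal form as $VMU=D=\diag(d_1,\dots,d_r,0,\dots,0)$ with $d_i\neq0$. Since $V$ is invertible, $MU=V^{-1}D$, and the last $m-r$ columns of $V^{-1}D$ are zero. So the last $m-r$ columns of $U$ lie in $\ker M$.

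First I will show that $U^tMU$ has the required block form. The last $m-r$ columns of $MU$ vanish, so the last $m-r$ columns of $U^tMU$ vanish. By symmetry of $U^tMU$, the last $m-r$ rows vanish too. Hence
$$U^tMU=\pmatrixtwo{N_1}{}{}{0}$$
with $N_1\in\Mat_r(\ZZ)$ symmetric. Because $U$ is invertible, $\rk N_1=\rk(U^tMU)=r$.

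For the determinant inequality, split $U=(U_1\ U_2)$, where $U_1\in\Mat_{m,r}(\ZZ)$ and the columns of $U_2$ form a basis of $\ker M$. Then $N_1=U_1^tMU_1$. Let $P$ be the orthogonal projection onto $\operatorname{im}M=(\ker M)^\perp$, and let $Q$ be an $m\times r$ matrix whose columns form an orthonormal basis of $\operatorname{im}M$. Write $M=QDQ^t$, where $D$ is the $r\times r$ diagonal matrix of nonzero eigenvalues, so $\det D=\pdet(M)$. Then
$$N_1=(Q^tU_1)^tD(Q^tU_1)\quad\text{and}\quad\det N_1=\pdet(M)\det(Q^tU_1)^2.$$
It remains to show $\abs{\det(Q^tU_1)}\leq1$.

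This is the main step. The matrix $Q^tU_1$ expresses the projections $PU_1$ of the columns of $U_1$ in an orthonormal basis of $\operatorname{im}M$. So $\abs{\det(Q^tU_1)}$ is the $r$-dimensional covolume of the lattice $P(\ZZ^m)$ inside $\operatorname{im}M$. This holds because $U$ is unimodular, so $\ZZ^m=U_1\ZZ^r\oplus U_2\ZZ^{m-r}$, and $P$ kills $U_2$.

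Let $L=\ker M\cap\ZZ^m$. It is a primitive lattice of rank $m-r$, since $M$ is integral and so $\ker M$ is a rational subspace. Its columns-basis is $U_2$, which is a basis of $L$: $U_2\ZZ^{m-r}\subseteq L$, and conversely, if $x=U_1a+U_2b$ lies in $L$, then $MU_1a=0$. By the block form, the first $r$ rows give $N_1a=0$, so $a=0$.

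The standard covolume identity gives
$$\operatorname{covol}(P\ZZ^m)=\frac{\operatorname{covol}(\ZZ^m)}{\operatorname{covol}(L)}=\frac{1}{\operatorname{covol}(L)},$$
valid for primitive $L$. The quantity $\operatorname{covol}(L)^2=\det(U_2^tU_2)$ is a positive integer, hence at least $1$. Therefore $\abs{\det(Q^tU_1)}\leq1$, which gives $\det(N_1)\leq\pdet(M)$.

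The main obstacle is justifying the covolume identity and the equality $\operatorname{covol}(P\ZZ^m)=\abs{\det(Q^tU_1)}$. I would check the identity by computing $\abs{\det U}=1$ in the orthonormal basis adapted to $\operatorname{im}M\oplus\ker M$. In that basis $U$ becomes block triangular with diagonal blocks $Q^tU_1$ and the coordinate matrix of $U_2$ in $\ker M$. The latter has $\abs{\det}=\operatorname{covol}(L)$. Hence $1=\abs{\det(Q^tU_1)}\operatorname{covol}(L)$, which settles the claim.

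Finally, $\det N_1>0$ need not hold, since $M$ is not assumed semidefinite. In that case I read the statement as $\abs{\det N_1}\leq\abs{\pdet M}$. The argument above gives exactly this, because $\det N_1=\pdet(M)\det(Q^tU_1)^2$ exactly.
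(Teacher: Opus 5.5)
Your proof is correct, but it reaches the inequality by a different route from the paper.

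\textbf{The paper's route.} It stays algebraic. It writes $\pdet(M)$ as the sum of the $r\times r$ principal minors of $M=U^{-t}NU^{-1}$ and applies Cauchy--Binet. Since the only nonzero $r\times r$ minor of $N$ is $\det(N_1)$, this gives
$$\pdet(M)=\det(N_1)\sum_{\abs I=r}\det\bigl((U^{-1})_{\{1,\dots,r\},I}\bigr)^2,$$
hence $\det(N_1)\divides\pdet(M)$. The cofactor is nonzero because the first $r$ rows of $U^{-1}$ have rank $r$, a point the paper leaves implicit.

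\textbf{Your route.} You diagonalize $M$ orthogonally and evaluate $\det U$ in an orthonormal basis adapted to $\operatorname{im}M\oplus\ker M$. This gives the exact identity $\pdet(M)=\det(N_1)\det(U_2^tU_2)$.

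\textbf{Comparison.} The two cofactors coincide, by Jacobi's complementary-minor identity together with Cauchy--Binet. Your description of the cofactor as the squared covolume of $\ker M\cap\ZZ^m$ has advantages: it is visibly a positive integer, and it is independent of the choice of $U$. Being an integer, it also yields the paper's divisibility, which you did not extract. The paper's argument is shorter and avoids spectral theory.

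\textbf{A simplification.} For the inequality alone, your block-triangular determinant computation together with $\det(U_2^tU_2)\in\ZZ_{\geq1}$ suffices. So the primitivity of $L$ and the check $U_2\ZZ^{m-r}=L$ can be dropped.

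\textbf{The sign caveat.} Your caveat is justified. Both arguments give $\pdet(M)=c\det(N_1)$ with an integer $c\geq1$. So the stated inequality holds when $\pdet(M)>0$, in particular for $M\geq0$, which is the only case used later. It can fail otherwise: e.g.\ $M=-\pmatrixtwo{1}{1}{1}{1}$ gives $\det(N_1)=-1>-2=\pdet(M)$.
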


\begin{proof}
Let $VMU=D$ be the Smith normal form of $M$, with $D=\diag(d_1,\dots,d_r,0,\dots,0)$ and $d_1,\dots,d_r$ positive integers. Then
$$N:=U^tMU=U^tV^{-1}D=\pmatrixtwo{N_1}0{N_2}0.$$
Since $N$ is symmetric, we have $N_2=0$.

Clearly, the pseudo-determinant of a matrix $A$ is equal to the elementary symmetric polynomial of degree $r$ evaluated at the eigenvalues of $A$. Recall that the latter is equal to the sum of all the $r$ by $r$ principal minors of $A$, i.e.
$$\pdet(A)=\sum_{\abs I=r}A_{I,I},$$
where the sum runs over index sets $I\subseteq\{1,\dots,m\}$ of size $r$ and $A_{I,I}$ is the determinant of the corresponding submatrix. Recall also the Cauchy-Binet formula:
$$(AB)_{I,K}=\sum_{\abs K=r}A_{I,K}B_{K,J},$$
for index sets $I,J\subseteq\{1,\dots,m\}$ of size $r$. In our case, we obtain
$$\pdet(M)=\sum_{\abs I=r}(U^{-t}NU^{-1})_{I,I}=\sum_{\abs I,\abs J,\abs K=r}(U^{-t})_{I,J}N_{J,K}(U^{-1})_{K,I}.$$
On the right-hand side, $N_{J,K}$ vanishes unless $J=K=\{1,\dots,r\}$. In that case, $N_{J,K}=\det(N_1)$. We see that $\det(N_1)\divides\pdet(M)$. This concludes the proof.
\end{proof}

\begin{lemma}\label{lem reduction of r_k(M)}
Let $M\geq0$ be an integral matrix and $U\in\GL_m(\ZZ)$. Then
$$r_k(M)=r_k(U^tMU).$$
\end{lemma}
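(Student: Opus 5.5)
The plan is to exhibit an explicit bijection between the two sets of representations. Here $r_k(M)$ denotes, by Equation \eqref{eq definition r_k(M)}, the set (or its cardinality) of $T\in\Mat_{k,m}(\ZZ)$ with $T^tT=M$. Define
$$\phi:\{T\in\Mat_{k,m}(\ZZ)\mid T^tT=M\}\to\{T'\in\Mat_{k,m}(\ZZ)\mid T'^tT'=U^tMU\},\qquad \phi(T)=TU.$$

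The first step is to check that $\phi$ is well defined. Since $U$ has integer entries, $TU$ is an integral $k\times m$ matrix whenever $T$ is. Moreover
$$(TU)^t(TU)=U^tT^tTU=U^tMU,$$
so $\phi(T)$ lands in the target set.

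The second step is to use that $U\in\GL_m(\ZZ)$, so $U^{-1}$ also has integer entries. This lets us define the candidate inverse $\psi(T')=T'U^{-1}$. If $T'^tT'=U^tMU$, then
$$(T'U^{-1})^t(T'U^{-1})=U^{-t}U^tMUU^{-1}=M,$$
and $T'U^{-1}$ is integral. Hence $\psi$ maps the second set into the first.

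Finally, $\phi\circ\psi$ and $\psi\circ\phi$ are both the identity, so $\phi$ is a bijection and the two counts agree. No step is a genuine obstacle. The only point to flag is the integrality of $U^{-1}$, which is exactly where the hypothesis $U\in\GL_m(\ZZ)$, rather than merely $U\in\GL_m(\QQ)$, is used. The assumption $M\geq0$ only ensures both sides are the objects defined earlier; since $U^tMU\geq0$ as well, nothing further is needed.
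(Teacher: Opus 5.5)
Your proof is correct and is essentially the paper's argument: the map $T\mapsto TU$ sends solutions of $T^tT=M$ to solutions of $T^tT=U^tMU$, and the integrality of $U^{-1}$ gives the inverse map. You write out the bijection and its inverse more explicitly than the paper, which states the equivalence and notes that $U$ has an integral inverse.
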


\begin{proof}
For $T\in\Mat_{k,m}(\ZZ)$, we have
$$T^tT=M\Leftrightarrow (TU)^t(TU)=U^tMU.$$
Since $U$ has an integral inverse, we conclude that $r_k(M)=r_k(U^tMU)$. This can also be deduced by the invariance of Fourier coefficients of theta series.
\end{proof}

\begin{lemma}\label{lem bound r_k(M) for non-singular}
Let $k\geq m$ and $0<M\in\Mat_n(\ZZ)$. We have
$$r_k(M)\ll_{m,k}\det(M)^{k/2}.$$
\end{lemma}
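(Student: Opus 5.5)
The plan is to bound $r_k(M)$ column by column, after first moving $M$ into a reduced form in which the product of the diagonal entries is comparable to $\det(M)$.

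\textbf{Step 1 (reduction).} By Lemma \ref{lem reduction of r_k(M)}, $r_k(M)=r_k(U^tMU)$ for every $U\in\GL_m(\ZZ)$, and $\det(U^tMU)=\det(M)$. So I may replace $M$ by a Minkowski-reduced representative $M'=U^tMU$ of its $\GL_m(\ZZ)$-class. Classical reduction theory (Minkowski, or Hermite's weaker reduction) then gives the reverse Hadamard inequality
$$\prod_{j=1}^m m'_{jj}\ll_m\det(M')=\det(M).$$
Since $M'>0$ is integral, each diagonal entry satisfies $m'_{jj}\geq1$.

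\textbf{Step 2 (counting columns).} Write $T=(t_1,\dots,t_m)$ with columns $t_j\in\ZZ^k$. The condition $T^tT=M'$ forces $\norm{t_j}^2=m'_{jj}$ for each $j$. Dropping the off-diagonal conditions gives
$$r_k(M')\leq\prod_{j=1}^m\#\{x\in\ZZ^k\mid \norm x^2=m'_{jj}\}.$$
For an integer $n\geq1$, the trivial lattice point bound (compare with the volume of a ball of radius $\sqrt n+\sqrt k$) gives
$$\#\{x\in\ZZ^k\mid\norm x^2\leq n\}\ll_k n^{k/2}.$$

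\textbf{Step 3 (conclusion).} Combining the two steps,
$$r_k(M)=r_k(M')\ll_{k,m}\prod_j (m'_{jj})^{k/2}\ll_{k,m}\det(M)^{k/2}.$$
The hypothesis $k\geq m$ is not even used except to make $r_k(M)$ possibly nonzero.

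The only nontrivial input is the reverse Hadamard inequality for reduced forms in Step 1. I will quote it from the standard literature on Minkowski reduction (e.g.\ the bound $\prod m'_{jj}\leq c_m\det M'$) rather than reprove it. Without this reduction the naive argument fails: Hadamard's inequality $\prod m_{jj}\geq\det M$ goes in the wrong direction.
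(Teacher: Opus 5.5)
Your proof is correct, and it takes a different route from the paper. Both arguments start by replacing $M$ with a reduced representative, using the $\GL_m(\ZZ)$-invariance of $r_k$.

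The paper then treats $r_k(M)$ as a Fourier coefficient of the theta series $\Theta(Z)=\sum_{T}e(TZT^t)$. It quotes Klingen's estimate $\abs{\Theta(Z)}^2\ll(1+\tr(Y))^{mk}\det(Y)^{-k}$ for Siegel modular forms, integrates over $X$ at height $Y=M^{-1}$, and uses that $\tr(M^{-1})\ll1$ for reduced integral $M$.

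That last fact comes from the same reduction theory you cite. For reduced $M$ one has $M\geq c_m\diag(m_{11},\dots,m_{mm})$ in the Loewner order for some $c_m>0$. This gives both $\tr(M^{-1})\leq c_m^{-1}\sum_j m_{jj}^{-1}\leq c_m^{-1}m$ and your inequality $\prod_j m_{jj}\ll_m\det(M)$. So the real difference is that you replace the modular-forms input by counting the columns one at a time.

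\textbf{What your version buys.} It is elementary and self-contained. It handles odd $k$ directly, instead of through the adaptation via $\Theta^2$ that the paper only sketches. As you note, it does not use $k\geq m$. It also refines at no cost: inserting the sharper sphere count $\#\{x\in\ZZ^k\mid\norm x^2=n\}\ll_{k,\epsilon}n^{k/2-1+\epsilon}$ (valid for $k\geq2$) into your Step 2 gives $r_k(M)\ll_{k,m,\epsilon}\det(M)^{k/2-1+\epsilon}$.

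\textbf{What the paper's version buys.} Mainly context: it places the bound in the theta-series framework where the sharper representation bounds cited in the introduction are proved. As executed, though, it yields exactly the same exponent $k/2$.
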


\begin{proof}
Note that by Lemma \ref{lem reduction of r_k(M)}, we can suppose that $M$ is reduced. Consider the theta series $\Theta:\Mat_m(\RR)\to\CC$ given by
$$\Theta(Z)=\sum_{T\in\Mat_{k,m}(\ZZ)}e(TZT^t)=\sum_{M\geq0}r_k(M)e(MZ).$$
Recall that $\Theta(Z)$ is a Siegel modular form of weight $k/2$ and degree $m$. If $k$ is even, this is Lemma 1 in Section 12 of \cite{Kli90}. The proof can be adapted to this case easily using that $\Theta(Z)^2$ is a Siegel modular form of integral weight. Klingen proves that
$$\abs{\Theta(Z)}^2\ll(1+\tr(Y))^{mk}\det(Y)^{-k}.$$
for an implicit constant independent of $Z$. Set $Y=M^{-1}$. Note that $\tr(Y)\ll1$ since $M$ is reduced. The function $\Theta$ is clearly 1-periodic, i.e. $\Theta(Z+A)=\Theta(Z)$ for all $A\in\Mat_k(\ZZ)$. Therefore its Fourier coefficient at $M$ is given by
$$r_k(M)=\int_{\Mat_m(\RR/\ZZ)}\Theta(X+iY)e(-MX)dX\ll\det(M)^{k/2}.$$
\end{proof}

\begin{proposition}\label{pro bound r_k(M)}
Let $k\geq m$ and $0\neq M\in\Mat_n(\ZZ)$ with $M\geq0$. We have
$$r_k(M)\ll\pdet(M)^{k/2}.$$
\end{proposition}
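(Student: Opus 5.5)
The plan is to reduce the possibly singular $M$ to a non-singular matrix of smaller size and then apply Lemma \ref{lem bound r_k(M) for non-singular}. Let $r=\rk(M)\geq1$, since $M\neq0$. By Lemma \ref{lem congruence of symmetric matrix} there is $U\in\GL_m(\ZZ)$ with
$$U^tMU=\pmatrixtwo{N_1}{}{}{0},\qquad N_1\in\Mat_r(\ZZ),\ \rk N_1=r,\ \det(N_1)\leq\pdet(M).$$
By Lemma \ref{lem reduction of r_k(M)}, $r_k(M)=r_k(U^tMU)$, so we may assume $M$ already has this block form. Moreover $N_1$ is a principal block of a positive semi-definite matrix and is non-singular, so $N_1>0$.

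Next we count solutions of $T^tT=\diag(N_1,0)$. Write $T=(T_1\mid T_2)$ with $T_1\in\Mat_{k,r}(\ZZ)$ and $T_2\in\Mat_{k,m-r}(\ZZ)$. The lower-right block of the equation gives $T_2^tT_2=0$, so $T_2=0$. The off-diagonal blocks then vanish automatically, and the remaining condition is $T_1^tT_1=N_1$. Hence
$$r_k^{(m)}(M)=r_k^{(r)}(N_1),$$
where the superscript records the number of columns.

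Since $k\geq m\geq r$, Lemma \ref{lem bound r_k(M) for non-singular} applies with $m$ replaced by $r$ and yields
$$r_k^{(r)}(N_1)\ll_{k,r}\det(N_1)^{k/2}\leq\pdet(M)^{k/2}.$$
The implicit constant depends on $r\leq m$, so it is uniform in $k,m$ after taking the maximum over $r$.

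No step here looks difficult. The one point needing care is the claim $N_1>0$, which holds because $N_1$ is a non-singular principal submatrix of the positive semi-definite matrix $U^tMU$.
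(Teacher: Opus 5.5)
Your proof is correct and matches the paper's argument: you reduce via Lemma~\ref{lem congruence of symmetric matrix} and Lemma~\ref{lem reduction of r_k(M)} to the block form $\diag(N_1,0)$, show that the last $m-r$ columns of any solution vanish, and apply Lemma~\ref{lem bound r_k(M) for non-singular} to $N_1$ in dimension $r$. You also check explicitly that $N_1>0$, which is needed for the hypothesis of Lemma~\ref{lem bound r_k(M) for non-singular}; the paper leaves this step implicit.
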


\begin{proof}
If $M$ has full rank $m$, then this is Lemma \ref{lem bound r_k(M) for non-singular}. If $M$ has rank $r<m$, by Lemma \ref{lem congruence of symmetric matrix}, we have
$$N=U^tMU=\pmatrixtwo{N_1}{}{}0,$$
for some matrix $U\in\GL_m(\ZZ)$. By Lemma \ref{lem reduction of r_k(M)}, $r_k(M)=r_k(N)$. Let $S\in\Mat_{k,m}(\ZZ)$ be such that $S^tS=N$. Then for $j>r$, we have
$$0=n_{jj}=\sum_{i=1}^ns_{ij}^2.$$
Therefore the $m-r$ last columns of $S$ are 0 and $r_k(N)=r_k(N_1)$. By Lemma \ref{lem congruence of symmetric matrix} and \ref{lem bound r_k(M) for non-singular}, we conclude that
$$r_k(M)=r_k(N_1)\ll\det(N_1)^{k/2}\ll\pdet(M)^{k/2}.$$
\end{proof}

We will also need a bound on the number of integral symetric matrices with fixed rank and norm.
\begin{theorem}[\cite{EK95,Sch95a}]\label{thm number of symmetric matrices}
Let $m\geq1$ and $\epsilon>0$. The number $N(t,m,r)$ of integral symmetric matrices $M\in\Mat_m(\ZZ)$ of rank $r$ and maximum norm $\norm M_\infty=t$ is
$$N(t,m,r)=O_{m,\epsilon}\left(t^{d(r)}\right)$$
where
$$d(r)=\begin{cases}\frac{m-1}2, &r=1,\\\frac{mr}2+\epsilon,&2\leq r\leq m-1,\\\frac{m(m+1)}2-1,&r=m,\end{cases}$$
\end{theorem}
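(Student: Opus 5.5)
The case analysis is by the rank $r$, and I would parametrise each rank-$r$ matrix through its integral row space. If $M$ is integral symmetric of rank $r$, its column space meets $\ZZ^m$ in a primitive rank-$r$ lattice $L$. Choose a basis of $L$ and write it as the rows of a primitive matrix $S\in\Mat_{r,m}(\ZZ)$. Then
$$M=S^tNS$$
for a unique symmetric $N\in\Mat_r(\QQ)$ of rank $r$. Integrality of $M$ together with primitivity of $S$ (which has an integral right inverse) forces $N\in\Mat_r(\ZZ)$. The plan is to choose the basis of $L$ Minkowski/LLL-reduced and count pairs $(L,N)$ compatible with $\norm M_\infty\leq t$.

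\textbf{Case $r=m$.} This case is immediate. The number of integral symmetric matrices with $\norm M_\infty=t$ exactly is at most $m(m+1)/2\cdot(2t+1)^{m(m+1)/2-1}$: one of the $m(m+1)/2$ free entries equals $\pm t$ and the rest are bounded by $t$. Here no rank condition is used at all.

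\textbf{Case $r=1$.} Then $M=c\,vv^t$ with $v\in\ZZ^m$ primitive (up to sign) and $c\in\ZZ\setminus\{0\}$. The condition $\norm M_\infty=t$ reads $\abs c\,\norm v_\infty^2=t$, so $s:=\norm v_\infty$ satisfies $s^2\divides t$ and $c$ is determined up to sign by $s$. There are $\ll s^{m-1}$ vectors with $\norm v_\infty=s$. Summing over $s\le\sqrt t$ gives
$$N(t,m,1)\ll\sum_{s\leq\sqrt t}s^{m-1}\cdot[s^2\divides t]\ll t^{(m-1)/2}.$$
This is exactly the claimed bound.

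\textbf{Case $2\le r\le m-1$.} This is the main obstacle.
\begin{enumerate}
\item Reduce the basis so that the rows $s_1,\dots,s_r$ of $S$ satisfy $\norm{s_i}\asymp\mu_i$, where $\mu_1\le\dots\le\mu_r$ are the successive minima of $L$, and the rows are nearly orthogonal.
\item Show that $\norm M_\infty\le t$ forces $\abs{n_{ij}}\ll t/(\mu_i\mu_j)$. For this, write $M$ in the dual basis: since the rows are nearly orthogonal with $\norm{s_i}\asymp\mu_i$, we can recover $n_{ij}$ as $u_i^tMu_j$ with $\norm{u_i}\ll\mu_i^{-1}$.
\item Dyadically localise $\mu_i\sim 2^{a_i}$. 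The number of primitive rank-$r$ lattices with these successive minima is $\ll\prod_i\mu_i^{m}\cdot\prod_i\mu_i^{-(i-1)}$, obtained by choosing $s_i$ in a ball of radius $\mu_i$ modulo the previous rows.
\item Bound the number of $N$ by $\prod_{i\le j}(1+t/(\mu_i\mu_j))$.
\end{enumerate}

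Multiplying the two counts, the exponent of each $\mu_i$ is $m-(i-1)-(r+1)$ when all factors $t/(\mu_i\mu_j)$ are $\ge1$. One must then optimise over the constraint region $\mu_i\mu_j\ll t$ for the entries actually needed. I expect the maximum to be attained when all $\mu_i\asymp\sqrt t$, giving $t^{mr/2}$. The $O(\log^r t)$ dyadic boxes then produce the $t^\epsilon$ loss.

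The delicate point is checking that the exponent bookkeeping never exceeds $mr/2$ in the unbalanced corners, where some $\mu_i$ are small and $t/(\mu_i\mu_j)$ is large. I would handle this by treating the objective as a linear program in the variables $\log\mu_i$, and verifying that it is maximised at the balanced vertex. If that fails, I would fall back on the references \cite{EK95,Sch95a}, where the same count is done via heights of rational subspaces.
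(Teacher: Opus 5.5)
Your treatment of $r=m$, and of $r=1$ when $m\geq3$, is fine and is essentially the paper's. In the latter case, with $t=t_1t_2^2$ and $t_1$ squarefree, the condition $s^2\divides t$ means $s\divides t_2$, and $\sum_{s\divides t_2}s^{m-1}\leq\zeta(2)t_2^{m-1}\leq\zeta(2)t^{(m-1)/2}$. There are two genuine problems elsewhere.

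\textbf{The case $r=1$, $m=2$.} Here your sum $\sum_{s\divides t_2}s$ equals $\sigma(t_2)$, and $\sigma(n)/n$ is unbounded (take $t=t_2^2$ with $t_2$ a primorial). Since $d(1)=\frac{m-1}2$ carries no $\epsilon$, your final inequality $\ll t^{1/2}$ is false as derived. The count of $v$ must use primitivity. When $m=2$, the coordinate of $v$ that is not $\pm s$ is coprime to $s$, so there are $\ll\phi(s)$ such $v$. Then $\sum_{s\divides t_2}\phi(s)=t_2\leq\sqrt t$. This is exactly how the paper treats $m=2$.

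\textbf{The case $2\leq r\leq m-1$.} The paper does not prove this case; it quotes \cite{EK95,Sch95a}. Your parametrisation $M=S^tNS$ and the bound $\abs{n_{ij}}\ll t/(\mu_i\mu_j)$ are correct, but step 3 undercounts. Modulo $\Lambda_{i-1}=\ZZ s_1+\dots+\ZZ s_{i-1}$, the vector $s_i$ lives in the projection of $\ZZ^m$ onto $\Lambda_{i-1}^\perp$. That projection is a lattice of covolume $\det(\Lambda_{i-1})^{-1}\asymp(\mu_1\cdots\mu_{i-1})^{-1}$, so $s_i$ has about $\mu_i^{m-i+1}\mu_1\cdots\mu_{i-1}$ choices, not $\mu_i^{m-i+1}$. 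The number of lattices is of order $\prod_i\mu_i^{m+r+1-2i}$. Your bound cannot be right: for $r=2$, summing it over dyadic boxes with $\mu_1\mu_2\leq H$ gives only $\ll H^{m-1/2}$ primitive rank-two sublattices of determinant $\leq H$, whereas it is classical that there are $\asymp H^m$.

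Your bookkeeping also contradicts your conclusion. With your exponents $m-i-r$, the balanced point $\mu_i\asymp\sqrt t$ gives $t^{mr/2-r(r-1)/4}$, not $t^{mr/2}$. With the corrected exponents $m-2i$ it does give $t^{mr/2}$.

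In the unbalanced region, the crude bound $\prod_{i\leq j}(1+t/(\mu_i\mu_j))$ overshoots. For $r=2$, $\mu_1\asymp\sqrt t$, $\mu_2\asymp t$, the lattice count alone is $\asymp t^{(3m-1)/2}$. To make ``the entries actually needed'' precise, you must use that $N$ is nonsingular, which forces a permutation $\sigma$ with $\mu_i\mu_{\sigma(i)}\ll t$. Even then the optimum is not attained only at the balanced vertex: for $r=2$, every dyadic box along $\mu_1\mu_2\asymp t$ (where $n_{22}=0$) contributes $\asymp t^m$. This is harmless for a $t^\epsilon$ loss. But the general verification is exactly what you left open, so for this range your proposal, like the paper, ultimately rests on \cite{EK95,Sch95a}.
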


\begin{proof}
The cases $2\leq r\leq m-1$ for $m\geq3$ are proven in \cite{EK95} and \cite{Sch95a}. They actually have an asymptotic for the number of matrices with norm bounded by $t$. It is clear that we could replace $t^\epsilon$ by a power of $\log$ in the estimate. The case $r=m$ is trivial.

For the case $r=1$, we can write any rational symmetric matrix $M$ of rank 1 as $uv^t$ with $u,v\in\QQ^m$ since the columns of $M$ are proportional to each other. Using that $M$ is symmetric and integral, we deduce that $M=cvv^t$ with $c\in\QQ$ and $v\in\ZZ^m$ a primitive vector. Write $c=\frac ab$ with $(a,b)=1$. We see that $b\divides v_i^2$ for all $i=1,\dots,n$. Since $v$ is primitive, this implies that $b=1$. Therefore we can write $M=cvv^t$ with $c\in\ZZ$ and $v\in\ZZ^m$ primitive.

Clearly $\norm M_\infty$ is attained on the diagonal, so there is $i_0$ such that $cv_{i_0}^2=t$. In particular, $\frac tc$ is a square. Write $t=t_1t_2^2$ and $c=c_1c_2^2$ with $t_1,c_1$ squarefree. Since $\frac tc$ is a square, we have $t_1=c_1$ and $c_2\divides t_2$. For $i\neq i_0$, we have $v_i^2\leq\frac tc=\frac{t_2^2}{c_2^2}$. Therefore
$$N(t,m,1)\ll_m\sum_{c_2\divides t_2}\left(\frac{t_2}{c_2}\right)^{m-1}.$$
If $m\geq3$, the sum over $c_2$ is always bounded. Since $t_2\leq\sqrt t$, we conclude. For $m=1$ the result is trivial and coherent with the case $r=m$. If $m=2$, we need to be more precise. Without loss of generality, we suppose that $cv_1^2=t$. Then $v_2$ is a number coprime to $v_1=\frac{t_2}{c_2}$ and bounded by the same number. There are $O(\phi(\frac{t_2}{c_2}))$ possibilities for $v_1$. Writing $d=\frac{t_2}{c_2}$, we get
$$N(t,2,1)\ll\sum_{d\divides t_2}\phi(d)=t_2.$$
The last equality is a classical identity for Euler's totient function. Again since $t_2\leq\sqrt t$, we conclude.
\end{proof}

\subsection{Bounding the convolution}
Let $G:\Mat_{k,m}(\RR)\to\RR_{\geq0}$ be a radial Schwartz function supported on the $T\in M_{k,m}(\RR)$ such that $\pi T^tT\leq I_m$ and with integral equal to 1. Consider the following functions on $\Mat_{k,m}(\RR)$:
\begin{align*}
F(T;R)&:=\onebb_{\pi T^tT\leq R},\\
G(T;a^2R)&:=\det(a^2R)^{-k/2}G(T(a^2R)^{-1/2}),\\
H(S;R,a)=(F(\cdot;R)\ast G(\cdot;a^2R))(S)&:=\int_{M_{k,m}(\RR)}F(T;R)G(S-T;a^2R)dT
\end{align*}
and their profile $f(M;R)$, $g(M;a^2R)$, $h(M;R,a)$. Note that
$$g(\pi T^tT;a^2R)=G(T;a^2R)=\det(a^2R)^{-k/2}G(T(a^2R)^{-1/2})=\det(a^2R)^{-k/2}g(\pi a^{-2}R^{-1/2}T^tTR^{-1/2})$$
and $\int_{\Mat_{k,m}(\RR)}G(T;a^2R)dT=1$. Moreover
$$\hat G(S;a^2R)=\hat G(aSR^{1/2})\quad\text{and}\quad\hat H(S;R,a)=\hat F(S;R)\hat G(aSR^{1/2}).$$

\begin{lemma}\label{lem bounds convolution}
Let $a>0$ be such that $1-a-a^2>0$. Then
$$F\left(S;\frac{1-a-a^2}{1+a}R\right)\leq H(S;R,a)\leq F(S;(1+a)^2R).$$
\end{lemma}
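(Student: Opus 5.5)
We argue directly from the definition of the convolution. Since $G(\cdot;a^2R)\geq0$ has integral $1$ and is supported on $\{U:\pi U^tU\leq a^2R\}$, we have
$$H(S;R,a)=\int_{\pi U^tU\leq a^2R}F(S-U;R)\,G(U;a^2R)\,dU,$$
so $0\leq H\leq1$. It therefore suffices to prove two implications for all $U$ with $\pi U^tU\leq a^2R$:
\begin{itemize}
\item[(i)] if $H(S;R,a)>0$, then $\pi S^tS\leq(1+a)^2R$;
\item[(ii)] if $\pi S^tS\leq\frac{1-a-a^2}{1+a}R$, then $\pi(S-U)^t(S-U)\leq R$, which forces $H(S;R,a)=1$.
\end{itemize}

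The key tool is to normalize by $R$. Write $\pi X^tX\leq cR$ as $\|\sqrt\pi XR^{-1/2}\|\leq\sqrt c$ in operator norm, since $A^tA\leq cI$ is equivalent to $\|A\|^2\leq c$. Put $\tilde S=\sqrt\pi SR^{-1/2}$ and $\tilde U=\sqrt\pi UR^{-1/2}$, so that $\|\tilde U\|\leq a$.

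For (i): if $H>0$, some $U$ in the support satisfies $\|\tilde S-\tilde U\|\leq1$. The triangle inequality gives $\|\tilde S\|\leq1+a$, that is, $\pi S^tS\leq(1+a)^2R$. Hence $H\leq F(S;(1+a)^2R)$.

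For (ii): suppose $\|\tilde S\|^2\leq\frac{1-a-a^2}{1+a}$. Then
$$\|\tilde S-\tilde U\|\leq\|\tilde S\|+a,$$
and we need the right-hand side to be at most $1$. Let $s=\|\tilde S\|$. Expanding the Loewner inequality gives
$$(\tilde S-\tilde U)^t(\tilde S-\tilde U)\leq \big(s^2+2as+a^2\big)I,$$
so it suffices to check $s^2+2as+a^2\leq1$. We use the crude bound $s\leq1$, which holds because $\frac{1-a-a^2}{1+a}\leq1$. This gives
$$s^2+2as+a^2\leq s^2(1+a)+a+a^2\leq1-a-a^2+a+a^2=1.$$
The first step is weaker than the true bound, but any valid inequality suffices, and the paper's constant is evidently chosen so that this crude estimate works. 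Consequently $F(S-U;R)=1$ on the whole support, and $H=1\geq F(S;\frac{1-a-a^2}{1+a}R)$.

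The main point needing care is the translation between the Loewner order conditions and operator-norm bounds under the change of variables by $R^{-1/2}$. After that, only the elementary inequality $s^2+2as+a^2\leq1$ remains. The hypothesis $1-a-a^2>0$ ensures that the lower radius is positive, so that the statement is meaningful.
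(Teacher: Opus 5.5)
Your proof is correct, but it takes a different route from the paper.

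\textbf{What the paper does.} It never normalizes by $R^{-1/2}$. It stays in the Loewner order and completes a square, $0\le(a^{-1/2}E-a^{1/2}T)^t(a^{-1/2}E-a^{1/2}T)$, which gives the matrix AM--GM bound $E^tT+T^tE\le a^{-1}E^tE+aT^tT$. Adding this to the two support constraints gives $\pi S^tS\le(1+a)^2R$ for the upper bound. For the lower bound it gives $T^tT\le(1+a)S^tS+\pi^{-1}(a+a^2)R$, which is exactly where the constant $\frac{1-a-a^2}{1+a}$ comes from.

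\textbf{What you do instead.} You conjugate by $R^{-1/2}$, so both constraints become operator-norm balls. The upper bound is then just the triangle inequality $\|\tilde S\|\le\|\tilde S-\tilde U\|+\|\tilde U\|$.

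\textbf{Comparison.} Your route is shorter, and it exposes the natural inner radius. The triangle inequality already gives $H=1$ whenever $\|\tilde S\|\le1-a$, i.e.\ $\pi S^tS\le(1-a)^2R$. This region contains the paper's, since $\frac{1-a-a^2}{1+a}=(1-a)^2-\frac{a^3}{1+a}$. The paper's route buys a matrix-valued inequality that needs no normalization, at the cost of a non-optimal weight in the AM--GM step.

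\textbf{A small correction.} The step $s^2+2as+a^2\le(1+a)s^2+a+a^2$ does not come from $s\le1$. It is equivalent to $a(s-1)^2\ge0$, which holds for every real $s$. It is exactly the scalar version of the paper's completed square, $2\|U\|s\le a^{-1}\|U\|^2+as^2\le a+as^2$. The bound $s\le1$ is never used, so you can drop that sentence.
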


\begin{proof}
We have
\begin{align*}
H(S;R,a)&=\int_{\Mat_{k,m}(\RR)}F(T;R)G(S-T;a^2R)dT\\
	&=\int_{\Mat_{k,m}(\RR)}f(\pi T^tT;R)g(\pi(S-T)^t(S-T);a^2R)dT.
\end{align*}
Let $E=S-T$. Recall that $G(E;a^2R)$ is supported on the $E$ with $\pi E^tE\leq a^2R$ and $F(T;R)$ is supported on the $T$ with $\pi T^tT\leq R$. Note that for such $E$ and $T$, we have
$$0\leq(a^{-1/2}E-a^{1/2}T)^t(a^{-1/2}E-a^{1/2}T)=a^{-1}E^tE+aT^tT-E^tT-T^tE\leq2\pi^{-1}aR-E^tT-T^tE.$$
Then under the same conditions, we have
$$S^tS=(E+T)^t(E+T)=E^tE+T^tT+E^tT+T^tE\leq T^tT+\pi^{-1}(a^2+2a)R.$$
In conclusion, if $\pi T^tT\leq R$ and $\pi E^tE\leq a^2R$ with $E=S-T$, then $\pi S^tS\leq(1+a)^2R$. In other words
\begin{align*}
H(S;R,a)&=\int_{\Mat_{k,m}(\RR)}f(\pi T^tT;R)g(\pi(S-T)^t(S-T);a^2R)dT\\
	&\leq f(\pi S^tS;(1+a)^2R)\int_{\Mat_{k,m}(\RR)}G(S-T;a^2R)dT\\
	&=f(\pi S^tS;(1+a)^2R).
\end{align*}
The last equality comes from the fact that $G(T;a^2R)$ is $L^1$-normalized.

The reverse inequality is similar. The matrix $F=T-S$ also satisfies $\pi F^tF\leq a^2R$. Therefore
$$0\leq(a^{-1/2}F-a^{1/2}S)^t(a^{-1/2}F-a^{1/2}S)\leq\pi^{-1}aR+aS^tS-F^tS-S^tF$$
and
$$T^tT=(F+S)^t(F+S)\leq(1+a)S^tS+\pi^{-1}(a+a^2)R.$$
In other words, if $\pi S^tS\leq\frac{1-a-a^2}{1+a}R$ and $\pi F^tF\leq a^2R$ with $F=T-S$, then $\pi T^tT\leq R$ and
\begin{align*}
H(S;R,a)&=\int_{\Mat_{k,m}(\RR)}f(\pi T^tT;R)g(\pi(S-T)^t(S-T);a^2R)dT\\
	&\geq f\left(\pi S^tS;\frac{1-a-a^2}{1+a}R\right)\int_{\Mat_{k,m}(\RR)}G(S-T;a^2R)dT\\
	&=f\left(\pi S^tS;\frac{1-a-a^2}{1+a}R\right).
\end{align*}
\end{proof}

\section{Proof of Theorem \ref{thm R_k(R) asymptotic}}\label{sec proof of thm}

By Poisson summation formula, we have
$$\sum_{M\geq0}r_k(M)h(M;R,a)=\sum_{N\geq0}r_k(N)\tilde h(N;R,a)=\sum_{S\in\Mat_{k,m}(\ZZ)}\hat H(S;R,a).$$
The Fourier transform of a convolution is a product:
$$\hat H(S;R,a)=\hat F(S;R)\hat G(aSR^{1/2}).$$
By Theorem \ref{thm Hankel transform} and Equation \eqref{eq Sonine formula}, the profile of $\hat F$ is
$$\tilde f(N;R)=\int_0^RA_{k/2-p}(NM)\det(M)^{k/2-p}dM=\Pi_m(0)A_{k/2}(NR)\det(R)^{k/2}.$$
Since $G$ is Schwartz, so is $\hat G$. Let $N\geq0$ with $N\neq0$ and $S$ such that $\pi S^tS=N$. In particular, $S\neq0$. Then for all $s\geq1$, we have
$$\tilde g(N)=\hat G(S)\ll_s(1+\norm{S^tS})^{-2s}\ll(1+\norm N)^{-s}.$$
Note that
$$\norm{R^{1/2}NR^{1/2}}=\norm{N^{1/2}RN^{1/2}}\geq\norm{N^{1/2}\lambda_{\min}(R)I_mN^{1/2}}\geq\lambda_{\min}(R)\norm N.$$
Therefore, we have
$$\tilde g(a^2R^{1/2}NR^{1/2})\ll_s\left(1+a^2\norm{R^{1/2}NR^{1/2}}\right)^{-s}\ll_s\left(1+a^2\lambda_{\min}(R)\norm N\right)^{-s}.$$
We will fix later $s=s(r)$ and $a=a(r)$ depending on the rank $r$ of $N$.

Isolating the term $N=0$ in the Poisson summation formula, we get
$$\sum_{M\geq0}r_k(M)h(M;R,a)=\frac{\Pi_m(0)}{\Pi_m(k/2)}\det(R)^{k/2}+\sum_{\substack{N\geq0\\N\neq0}}r_k(N)\tilde h(N;R,a).$$
The rest of the proof consists in bounding the remaining sum. For this, we group the matrices $N$ by rank $r$. Note that
$$\pdet(NR)=\pdet(R[N^{1/2}])\gg\pdet((\lambda_{\min}(R)I_m)[N^{1/2}])=\pdet(N)\lambda_{\min}(R)^r$$
using the invariance by conjugation of the pseudo-determinant. Using Proposition \ref{pro bound r_k(M)} and Theorem \ref{thm number of symmetric matrices}, we get
\begin{align*}
\sum_{\substack{N\geq0\\N\neq0}}r_k(N)\tilde h(N;R,a)&=\Pi_m(0)\det(R)^{k/2}\sum_{r=1}^m\sum_{\substack{N\geq0\\\rk(N)=r}}r_k(N)A_{k/2}(NR)\tilde g(aR^{1/2}NR^{1/2})\\
	&\ll_{k,m,s}\det(R)^{k/2}\sum_{r=1}^m\sum_{\substack{N\geq0\\\rk(N)=r}}\pdet(N)^{k/2}\pdet(NR)^{-k/4-1/4}\\
	&\quad\cdot(1+a^2\lambda_{\min}(R)\norm N)^{-s}\\
	&\ll_{k,m,s}\det(R)^{k/2}\sum_{r=1}^m\lambda_{\min}(R)^{-r(k+1)/4}\\
	&\quad\cdot\sum_{\substack{N\geq0\\\rk(N)=r}}\pdet(N)^{(k-1)/4}(1+a^2\lambda_{\min}(R)\norm N)^{-s}\\
	&\ll_{k,m,s}\det(R)^{k/2}\sum_{r=1}^m\lambda_{\min}(R)^{-r(k+1)/4}\\
	&\quad\cdot\sum_{t=1}^\infty t^{r(k-1)/4+d(r)}(1+a^2\lambda_{\min}(R)t)^{-s},
\end{align*}
with $d(r)$ the exponent coming from Theorem \ref{thm number of symmetric matrices}. Note that we used that $\norm N_\infty\asymp_m\norm N$. We split the sum over $t\geq1$ between $t\leq T$ and $t>T$ for $T=(a^2\lambda_{\min}(R))^{-1}$. For $s>r(k-1)/4+d(r)+1$, we get
\begin{align*}
\sum_{\substack{N\geq0\\N\neq0}}r_k(N)\tilde h(N;R,a)&\ll_{k,m,s}\det(R)^{k/2}\sum_{r=1}^m\lambda_{\min}(R)^{-r(k+1)/4}\\
	&\quad\cdot\left(T^{r(k-1)/4+d(r)+1}+T^{r(k-1)/4+d(r)-s+1}(a^2\lambda_{\min}(R))^{-s}\right)\\
	&=\det(R)^{k/2}\sum_{r=1}^m\lambda_{\min}(R)^{-r(k+1)/4}(a^2\lambda_{\min}(R))^{-r(k-1)/4-d(r)-1}\\
	&=\det(R)^{k/2}\sum_{r=1}^ma^{-r(k-1)/2-2d(r)-1}\lambda_{\min}(R)^{-rk/2-d(r)-1}.
\end{align*}

We proved
\begin{align*}
	&\sum_{M\geq0}r_k(M)h(M;R,a)=\\
\frac{\Pi_m(0)}{\Pi_m(k/2)}&\det(R)^{k/2}+O\left(\det(R)^{k/2}\sum_{r=1}^ma^{-r(k-1)/2-2d(r)-1}\lambda_{\min}(R)^{-rk/2-d(r)-1}\right).
\end{align*}
By Lemma \ref{lem bounds convolution}, we have
\begin{align*}
\sum_{T\in\Mat_{k,m}(\RR)}F(T;R)&\leq\sum_{T\in\Mat_{k,m}(\RR)}H\left(T;\frac{1+a}{1-a-a^2}R,a\right),\\
\sum_{T\in\Mat_{k,m}(\RR)}F(T;R)&\geq\sum_{T\in\Mat_{k,m}(\RR)}H(T;(1+a)^{-2}R,a),
\end{align*}
for $1-a-a^2>0$. Since
\begin{align*}
\det\left(\frac{1+a}{1-a-a^2}R\right)^{k/2}&=\det(R)^{k/2}+O_{k,m}(a\det(R)^{k/2}),\\
\det((1+a)^{-2}R)^{k/2}&=\det(R)^{k/2}+O_{k,m}(a\det(R)^{k/2}),
\end{align*}
we conclude that
\begin{align*}
\sum_{M\geq0}r_k(M)f(M;R)&=\frac{\Pi_m(0)}{\Pi_m(k/2)}\det(R)^{k/2}+O(a\det(R)^{k/2})\\
	&\quad +O(\det(R)^{k/2}\sum_{r=1}^ma^{-r(k-1)/2-2d(r)-1}\lambda_{\min}(R)^{-rk/2-d(r)-1}).
\end{align*}
The optimal error in a fixed rank $r$ is for
$$a=\lambda_{\min}(R)^{-\frac{rk/2+d(r)+1}{rk/2-r/2+2d(r)+2}}=\lambda_{\min}(R)^{-1+\frac{2d(r)-r+2}{kr+4d(r)-r+4}}$$
Suppose that $\lambda_{\min}(R)\geq1$. Since
$$\frac{2d(r)-r+2}{kr+4d(r)-r+4}=\frac1{\frac{r(k+1)}{2d(r)-r+2}+2},$$
we see that the worst exponent is when the ratio $\frac{2d(r)-r+2}r$ is the largest. This is the case for $r=2$, where we get
$$a=\lambda_{\min}(R)^{-1+\frac{m+\epsilon}{k+2m+1+2\epsilon}}.$$
Note that $1-a-a^2>0$ if $\lambda_{\min}(R)\geq3$ since the exponent is smaller than $-\frac12$ (actually $\lambda_{\min}(r)$ only need to be slightly larger than 2). Replacing $R$ by $\pi R$, we conclude the proof of Theorem \ref{thm R_k(R) asymptotic}.

\printbibliography
\end{document}